\newtheorem{theorem}{Theorem}[section]
\newtheorem{theorem*}{Theorem}
\newtheorem{lemma}[theorem]{Lemma}
\newtheorem{proposition}[theorem]{Proposition}
\theoremstyle{definition}
\newtheorem*{remark*}{Remark}
\newtheorem*{remarks*}{Remarks}
\newtheorem*{corollary*}{Corollary}
\numberwithin{figure}{section}
\numberwithin{table}{section}
\numberwithin{equation}{section}
\newcommand{\OP}{\operatorname}
\begin{document}

\title{A short proof that the $L^p$-diameter of $Diff_0(S,area)$ is infinite.}
\author{Micha\l\ Marcinkowski}
\address{Institute of Mathematics, Polish Academy of Sciences, Wroc\l aw, Poland}
\email{marcinkow@math.uni.wroc.pl}
\thanks{The author was supported by grant Sonatina 2018/28/C/ST1/00542 funded by Narodowe Centrum Nauki} 

\begin{abstract} 
We give a short proof that the $L^p$-diameter of the group of area preserving diffeomorphisms isotopic to the identity of a compact surface is infinite.
\end{abstract}

\maketitle
 
\section{Introduction}

Let $(M,g)$ be a Riemannian manifold and let $\mu$ be the measure induced by the metric $g$. 
By $\OP{Diff}_0(M,\mu)$ we denote the group of all diffeomorphisms of $M$ that preserve $\mu$ and are isotopic to the identity.

In \cite{shni.rus} A. Shnirelman showed that the $L^2$-diameter of $\OP{Diff}_0(M,\mu)$
is finite if $M$ is the $n$-dimensional ball for $n>2$ (see also \cite{shni.gafa}). 
Conjecturally the same is true as well for any compact simply connected Riemannian manifold of dimension greater than $2$ (it is stated in \cite{eliashberg.ratiu} without proof).

The situation is different for $2$-dimensional manifolds.
In this case it is customary to denote the measure induced by $g$ by $\OP{area}$. 
For simplicity, let us restrict the discussion to orientable compact connected Riemannian surfaces $(S,g)$.
Eliashberg and Ratiu \cite{eliashberg.ratiu} proved that the $L^p$-diameter ($p\geq 1$) of $\OP{Diff}_0(S,\OP{area})$ 
is infinite if $S$ is a surface with boundary. 
They show that the Calabi homomorphism is Lipschitz with respect to the $L^p$-norm.
Later Gambaudo and Lagrange \cite{gambaudo.lagrange} obtained a similar result for a huge class of quasimorphisms on 
$\OP{Diff}_0(S,\OP{area})$ if $S$ is the closed disc (see as well \cite{brandenburskylpmetrics,brandshelaut,egoraction} for more 
results concerning quasimorphisms and the $L^p$ geometry).
Their proof makes use of the braid group of the disc and inequalities relating the geometric intersection number of a braid and its word-length.  

If $S$ has negative Euler characteristic, it is relatively easy to show that the $L^p$-diameter for $p \geq 1$ of $\OP{Diff}_0(S,\OP{area})$ is infinite 
(see Proposition \ref{p:basic.argument} or \cite[Theorem 1.2]{brandy.kedra}).
In the case of the torus one needs to know in addition that the group of hamiltonian diffeomorphisms 
of the torus is simply connected, which is a non-trivial result from symplectic topology (see \cite[Appendix A]{brandy.shelukhin}).
 
The last unsolved case was the sphere. Recently Brandenbursky and Shelukhin \cite{brandy.shelukhin} showed that in this 
case the diameter is as well infinite.
Moreover, for each $p \geq 1$, $\OP{Diff}_0(S^2,\OP{area})$ contains quasi-isometrically embedded right-angled Artin groups \cite{kim.koberda} 
and $\mathbb{R}^m$ for each natural $m$. 
Their arguments use some new tools along with the ideas from \cite{gambaudo.lagrange}. However, 
using intersection numbers in the case of the sphere requires considerably more work. 

The aim of this paper is to give a short and elementary proof of the following theorem.

\begin{theorem*}
Let $(S,g)$ be a compact surface (with or without boundary). Then for every $p\geq 1$
the $L^p$-diameter of $\OP{Diff}_0(S,area)$ is infinite.
\end{theorem*}

Our method gives a unified proof for every compact surface $S$.
It is partially inspired by \cite{gambaudo.lagrange}, in particular Lemma \ref{l:embedding} can be seen as a
generalization of an inequality obtained in \cite{gambaudo.lagrange} for the disk. 
The main simplification comes from the fact that instead of using the braid group and intersection numbers, 
we directly look at the geometry of the configuration space $C_n(S)$ with a certain complete metric described in Section~\ref{s:complete}.
In Section \ref{s:embedding} we relate the $L^1$-norm of $f \in \OP{Diff}_0(S,\OP{area})$ 
to a $L^1$-norm, defined by this complete metric, of the diffeomorphism on $C_n(S)$ induced by~$f$. 
This allows us to apply a simple technique, described in Section \ref{s:basic}, 
of showing the unboundedness of the $L^p$-norm in the case where the fundamental group of the manifold is complicated enough. 

\section{The $L^p$-norm}

Let $(M,g)$ be a Riemannian manifold and let $\mu$ be a finite measure on $M$.
Usually one assumes that $\mu$ is induced by $g$, even though the definition of $L^p$-norm works as well if $\mu$ is any finite measure
(then the $L^p$-norm could be a pseudo-norm). 
We introduce here a more general definition as it is useful for stating results in Section~\ref{s:embedding}.

Suppose $f \in \OP{Diff}_0(M,\mu)$ and let $X \colon M \to TM$ be a map to a tangent space of $M$ such that $X(x) \in T_{f(x)}M$. 
One can think of $X$ as a tangent vector to $\OP{Diff}_0(M,\mu)$ at the point $f$. 
The $L^p$-norm of $X$ is defined by the formula
$$\|X\|_p = \Big(\int_M |X(x)|^p dx\Big)^{\frac{1}{p}}.$$

Let $f_t \in \OP{Diff}_0(M,\mu)$, $t \in [0,1]$, be a smooth isotopy, i.e., it defines a smooth map $M \times [0,1] \to M$.
We always assume that isotopies are smooth.
The $L^p$-length of $\{f_t\}$ is defined by
$$l_p(\{f_t\}) = \int_0^1 \|\dot{f}_t\|_p dt, $$

where $\dot{f}_t(x) = \frac{d}{ds} f_s(x)|_{s=t} \in T_{f_t(x)}M$. 
Note that if $p=1$, then $\int_0^1 |\dot{f}_t(x)|dt$ is the length of the path $f_t(x)$, thus
$l_1(\{f_t\})$ can be interpreted as the $\mu$-average of the lengths of all paths $f_t(x)$.

Let $f \in \OP{Diff}_0(M,\mu)$, we define the $L^p$-norm of $f$ by
$$l_p(f) = \inf l_p(\{f_t\}),$$
where the infimum is taken over all smooth isotopies $f_t \in \OP{Diff}_0(M,\mu)$ connecting the identity on $M$ with $f$.
The assumption that $f$ is $\mu$-preserving was not used in the definition, but it is needed to show that $l_p$ satisfies the triangle inequality.

The $L^p$-diameter of $\OP{Diff}_0(M,\mu)$ equals 
$$\sup\{l_p(f)~\colon~f \in \OP{Diff}_0(M,\mu)\}.$$

It is worth noting that geodesics in $\OP{Diff}_0(M,\mu)$ with the $L^2$-metric are solutions of the Euler equations of an incompressible fluid. 
For more on the connection between the $L^2$-metric and hydrodynamics see \cite{arnold.khesin}.

\section{The base case}\label{s:basic}

In this section we present the basic method which can be used to show that for $p\geq 1$, 
the $L^p$-diameter of $\OP{Diff}_0(M,\mu)$ is infinite if $\pi_1(M)$ is complicated enough.
We start with a lemma.

\begin{lemma}\label{l:center}
Let $X$ be a topological space and let $f_t \in \OP{Homeo}(X)$, $t\in [0,1]$, be a loop in $\OP{Homeo}(X)$ based at $Id_X$, i.e., 
$f_0 = f_1 = Id_X$. Then for every $x \in X$, the loop $f_t(x)$, $t\in [0,1]$, is in the center of $\pi_1(X,x)$.  
\end{lemma}

\begin{proof}
Let $x \in X$ and let $\gamma_s$, $s \in [0,1]$, be a loop in $X$ based at $x$. 
Consider the map $\phi \colon S^1 \times S^1 \to X$ given by $(t,s) \to f_t(\gamma_s)$,
where $S^1 = [0,1]/_{0 \sim 1}$. We have that $\phi(t,0) = f_t(x)$ and $\phi(0,s) = \gamma_s$.
Thus loops $f_t(x)$ and $\gamma_s$ are in the image of the torus $S^1 \times S^1$, therefore they commute.  
\end{proof}

Let $(M,g)$ be a Riemannian manifold.
Suppose $h \in \pi_1(M)$. Let $l(h)$ denote the infimum over lengths of based loops in $M$ that represent $h$. 
We denote by $Z(\pi_1(M))$ the center of $\pi_1(M)$. 

\begin{proposition}\label{p:basic.argument}
Let $(M,g)$ be a Riemannian manifold and $\mu$ the measure induced by $g$. 
Assume that for every $r$ the set $\{h \in \pi_1(M) \colon l(h) < r\}$ is finite (it holds e.g., if $M$ is compact) 
and $\pi_1(M)/Z(\pi_1(M))$ is infinite. 	
Then for every $p\geq 1$ the $L^p$-diameter of $\OP{Diff}_0(M,\mu)$ is infinite. 
\end{proposition}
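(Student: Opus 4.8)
The plan is to reduce to $p=1$ and then construct, for every $L>0$, an element of $\OP{Diff}_0(M,\mu)$ whose $L^1$-norm exceeds $L$. For the reduction, since $\mu$ is finite, Hölder's inequality gives $\|X\|_1 \le \mu(M)^{1-1/p}\|X\|_p$ for every admissible tangent vector $X$, hence $l_1(\{f_t\}) \le \mu(M)^{1-1/p}\, l_p(\{f_t\})$ and therefore $l_1(f)\le \mu(M)^{1-1/p}\, l_p(f)$; so it suffices to make $l_1$ unbounded. I would also rewrite the $L^1$-length by Fubini as
$$l_1(\{f_t\}) = \int_M \ell\big(t\mapsto f_t(x)\big)\,dx,$$
where $\ell$ denotes the length of a path, so that bounding $l_1(f)$ from below amounts to finding, for every isotopy realising $f$, a set of points of definite measure whose trajectories are forced to be long.

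The mechanism forcing trajectories to be long is topological. Fix a basepoint $x_0$ and a point $y$ with $f(y)=y$. Given two isotopies $f_t,g_t$ from $\OP{id}$ to $f$, their concatenation (forward along $f_t$, backward along $g_t$) is a loop in $\OP{Homeo}(M)$ based at $\OP{id}$, so Lemma~\ref{l:center} shows that the loops $t\mapsto f_t(y)$ and $t\mapsto g_t(y)$ represent classes of $\pi_1(M,y)$ differing by a central element. Hence the coset $[t\mapsto f_t(y)]\cdot Z(\pi_1(M,y))$ is an invariant of $f$, independent of the chosen isotopy. I would then record the combinatorial input: setting $L(hZ)=\min\{l(h')\colon h'\in hZ\}$, the hypothesis that $\{h\colon l(h)<r\}$ is finite implies that only finitely many cosets $hZ$ satisfy $L(hZ)<r$ (each such coset contains one of the finitely many short elements); since $\pi_1(M)/Z(\pi_1(M))$ is infinite, there exist cosets with $L(hZ)$ arbitrarily large.

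Now fix such a coset with $L(hZ)\ge L$, choose a loop $\gamma$ based at $x_0$ representing $h$, and let $D$ be a \emph{fixed} small embedded disc around $x_0$, of area $\mu(D)>0$ independent of $h$. I would build an area-preserving \emph{point-push} $P_\gamma\in\OP{Diff}_0(M,\mu)$ supported in a neighbourhood of $\gamma\cup D$, fixing $D$ pointwise, whose natural isotopy drags each point of $D$ once around $\gamma$; by the previous paragraph the trajectory of each $y\in D$ under an \emph{arbitrary} isotopy realising $P_\gamma$ then represents some class in $hZ$, and so has length at least $L(hZ)\ge L$. Integrating over $D$ yields
$$l_1(P_\gamma)\ \ge\ \int_D \ell\big(t\mapsto f_t(y)\big)\,dy\ \ge\ \mu(D)\,L,$$
which tends to infinity with $L$ while $\mu(D)$ stays fixed, proving the proposition.

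The step I expect to be the main obstacle is the construction of $P_\gamma$: it must simultaneously be smooth and $\mu$-preserving, lie in the identity component, fix the \emph{fixed-size} disc $D$ \emph{pointwise}, and admit an isotopy carrying every point of $D$ around a loop in the class $h$ (so that the invariant is exactly $hZ$). Reconciling measure preservation with returning $D$ rigidly to itself, and handling loops $\gamma$ that are not embedded, is the delicate point; everything else is either a standard inequality or a direct application of Lemma~\ref{l:center}.
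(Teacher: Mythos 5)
Your proposal is correct and follows essentially the same route as the paper's proof: the H\"older reduction to $p=1$, the isotopy-independent coset invariant extracted from Lemma~\ref{l:center}, the observation that finiteness of $\{h \colon l(h)<r\}$ together with infiniteness of $\pi_1(M)/Z(\pi_1(M))$ produces cosets of arbitrarily large minimal length, and a measure-preserving finger-push of a fixed disc along a representative loop (the construction you flag as the main obstacle is exactly what the paper outsources to a cited reference). The one imprecision is that the trajectory of $y\in D$ is a loop based at $y$, not at $x_0$, so it represents a class in $hZ$ only after conjugation by a path inside $D$; its length is therefore bounded below by $L(hZ)-2\OP{diam}(D)$ rather than $L(hZ)$ --- the paper's proof carries precisely this correction term, and it is harmless since $D$ is fixed while $L(hZ)\to\infty$.
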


\begin{proof}
By the H\"older inequality we can assume $p=1$.
Let $z \in M$ be a base-point and let $h \in \pi_1(M,z)$.
We represent $h$ as a loop $\gamma$ based at $z$. 

Let $U$ be a contractible neighborhood of $z$ and let $f_t \in \OP{Diff}_0(M,\mu)$, $t\in [0,1]$, 
be a finger-pushing isotopy that moves $U$ all the way along $\gamma$. For detailed construction see \cite[Proof of Lemma 3.1]{mb}.

For every $x \in U$ we choose a path $\phi_x$ contained in $U$ connecting $z$ with $x$. 
We can assume that $l(\phi_x) < \OP{diam}(U)$, where $l(\phi_x)$ is the length of $\phi_x$. We denote by $\phi^*_x$ the reverse of $\phi_x$. 

The isotopy $f_t$ is defined such that it satisfies the following properties: 
\begin{enumerate}
\item for every $x \in U$, $f_1(x) = x$.
\item for every $x \in U$, the concatenation of $\phi_x$, $f_t(x)$ and $\phi^*_x$ is a loop based at $z$ and its homotopy class equals $h$.   
\end{enumerate}

Let $f_h = f_1$ and define $L_h = \min\{ l(hc) \colon c \in Z(\pi_1(M,z) \}$. 
We shall show that 
$$\mu(U)(L_h - 2\OP{diam}(U)) \leq l_1(f_h).$$

Let $g_t$, $t \in [0,1]$, be any isotopy connecting the identity on $M$ with $f_h$. 
Due to Lemma \ref{l:center}, for every $x \in U$, the paths $g_t(x)$ and $f_t(x)$ represent elements of $\pi_1(M,x)$ that differ by an element of the center. 
Thus the concatenation of $\phi_x$, $g_t(x)$ and $\phi^*_x$ represents an element of the form $hc \in \pi_1(M,z)$ where $c \in Z(\pi_1(M,z))$. 
Since $l(\phi_x) < \OP{diam}(U)$, we have that $l(g_t(x)) \geq L_h - 2\OP{diam}(U)$. 
Indeed, otherwise the concatenation of $\phi_x$, $g_t(x)$ and $\phi^*_x$ would be a loop of length less then $L_h \leq l(hc)$,
which is impossible. 

Since $l(g_t(x)) = \int_0^1|\dot{g}_t(x)|dt$, we have 

\begin{align*}
	\mu(U)(L_h - 2\OP{diam}(U)) &\leq \int_{U} \int_0^1 |\dot{g}_t(x)| dt dx&\\
	&\leq \int_M \int_0^1 |\dot{g}_t(x)| dt dx&\\
	&= l_1(\{g_t\}).&
\end{align*}

The isotopy $g_t$ was arbitrary, therefore $\mu(U)(L_h - 2\OP{diam}(U)) \leq l_1(f_h)$. 

By assumption, for every $r$ the set $S_h = \{h \in \pi_1(M) \colon l(h) < r\}$ is finite. 
Therefore, since $\pi_1(M)/Z(\pi_1(M))$ is infinite, there exists $h$ such that the coset $hZ(\pi_1(M))$ does not intersect $S_h$.  
For such $h$ we have $L_h \geq r$. 
Since the set $U$ does not depend of the choice of $h$, and $L_h$ can be arbitrary large, 
we conclude that the $L^1$-diameter of $\OP{Diff}_0(M,\mu)$ is infinite. 

\end{proof}

In particular, Proposition \ref{p:basic.argument} can be applied when $(S,g)$ is a compact surface of negative Euler characteristic
(then $\pi_1(S)$ is infinite and has trivial center).
Unfortunately, it says nothing about the $L^p$-diameter of $\OP{Diff}_0(S,\OP{area})$ for the remaining surfaces. 
The main goal of this paper is to find an argument which is still based on the proof of Proposition \ref{p:basic.argument},
but works for any compact surface $S$.

To this end one could pass to the configuration space of $n$ ordered points in $S$, denoted $C_n(S) \subset S^n$,
with the product Riemannian metric $g^n$.
Its fundamental group is the pure braid group $P_n(S)$, and $P_n(S)/Z(P_n(S))$ is infinite for every $S$ if $n>3$.  
However, the problem with this space is that every braid $P_n(S)$ can be represented as a based loop in $(C_n(S),g^n)$ of length
at most $2n\OP{diam}(S)+1$, thus one cannot apply Proposition \ref{p:basic.argument}. 

We solve this problem by changing the metric on $C_n(S)$.
We describe it, in a slightly more general setting, in the next section. 

\section{A complete metric on a manifold with removed submanifolds}\label{s:complete}

Let $(M, g)$ be a compact Riemannian manifold and let $D = \cup_{i=1}^{k} D_i$, where $D_i$ are submanifolds of~$M$.
The aim of this paragraph is to construct a metric on $M - D$ satisfying the following property: for every $L$ the number of elements in $\pi_1(M-D)$ that can be represented by a based loop of length less then $L$ is finite. 
For $x \in M$, denote by $d(x)$ the distance of $x$ to $D$, i.e.,

$$d(x) = d_g(x,D) = min\{d_g(x,D_i) : i=1,\ldots,k\},$$

where $d_g$ is the metric on $M$ induced by $g$.  

We rescale $g$ by $\frac{1}{d}$,
i.e., we define a new quadratic form $g_b$ on the tangent space of $M-D$ by
$$|v|_{g_b} = \frac{|v|_g}{d(x)},$$
where $v \in T_x (M-D)$ is a vector tangent to a point $x \in M-D$. 

Note that $d(x)$, and consequently $g_b$, are not differentiable.
They are only continuous. In this case $g_b$ is called a $C^0$-Riemannian metric and
a smooth manifold with such a quadratic form is called a $C^0$-Riemannian manifold. 
A $C^0$-Riemannian structure allows to define lengths of paths
and a metric $d$ on the underlying manifold. 
The topology induced by $d$ is equal to the manifold topology.

\begin{lemma}\label{l:compl}
$M-D$ with the metric $g_b$ is a complete $C^0$-Riemannian manifold.
\end{lemma}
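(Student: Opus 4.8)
The plan is to show completeness via the standard criterion: a $C^0$-Riemannian manifold is complete if and only if every Cauchy sequence (in the induced metric $d$) converges, and since $M-D$ is a smooth manifold sitting inside the compact manifold $M$, the only way a sequence can fail to converge in $M-D$ is by escaping toward the deleted set $D$. So the heart of the matter is to prove that any point of $D$ is at infinite $g_b$-distance from any interior point of $M-D$; equivalently, that a path in $M-D$ approaching $D$ must have infinite $g_b$-length. Intuitively this is because we rescaled $g$ by $\frac{1}{d(x)}$, and $\frac{1}{d(x)}$ blows up like the reciprocal of the distance to $D$, whose integral diverges logarithmically.

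First I would set up the key length estimate. Fix a point $p \in D$ and consider the function $d(x) = d_g(x, D)$. For a $C^1$ path $\gamma \colon [0,1) \to M-D$ with $\gamma(0) = x_0$ and $\gamma(t) \to p \in D$ as $t \to 1$, I want to bound its $g_b$-length from below. The crucial observation is that $d(x)$ is $1$-Lipschitz with respect to $d_g$ (as a distance function to a set), so along the path we have
\begin{equation*}
\Big| \frac{d}{dt} d(\gamma(t)) \Big| \leq |\dot\gamma(t)|_g
\end{equation*}
at almost every $t$. Therefore the $g_b$-length satisfies
\begin{equation*}
l_{g_b}(\gamma) = \int_0^1 \frac{|\dot\gamma(t)|_g}{d(\gamma(t))}\, dt \geq \int_0^1 \frac{|\frac{d}{dt} d(\gamma(t))|}{d(\gamma(t))}\, dt \geq \Big| \int_0^1 \frac{\frac{d}{dt} d(\gamma(t))}{d(\gamma(t))}\, dt \Big| = \big| \log d(\gamma(t)) \big|_0^1.
\end{equation*}
Since $d(\gamma(t)) \to d(p) = 0$ as $t\to 1$ while $d(\gamma(0)) = d(x_0) > 0$, this last quantity diverges to $+\infty$. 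Hence any path running out to $D$ has infinite $g_b$-length.

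From this estimate I would conclude completeness as follows. Let $(x_n)$ be a $d$-Cauchy sequence in $(M-D, g_b)$. Since $g_b \geq \frac{1}{\mathrm{diam}(M)} g$ pointwise (because $d(x) \leq \mathrm{diam}_g(M)$ for all $x$), the metric $d$ dominates a constant multiple of $d_g$, so $(x_n)$ is also $d_g$-Cauchy; by compactness of $M$ it converges in $M$ to some limit $x_\infty$. If $x_\infty \in M-D$ one checks that convergence also holds in $d$ on a neighborhood where $\frac{1}{d}$ is bounded, and we are done. If instead $x_\infty \in D$, the length estimate above forces $d(x_n, x_m)$ to stay bounded below whenever $x_m$ is close to $D$ but $x_n$ is not, contradicting the Cauchy property; more precisely, once $x_n$ enters a region where $d(\cdot)$ is arbitrarily small, its $d$-distance to any fixed interior point is arbitrarily large, so the sequence cannot be Cauchy. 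Thus $x_\infty \in M-D$ and the sequence converges.

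The main obstacle I anticipate is handling the regularity subtleties honestly: $d(x)$ is only Lipschitz, not $C^1$, so $\frac{d}{dt} d(\gamma(t))$ must be justified via Rademacher's theorem and the chain rule for Lipschitz compositions along the (smooth or rectifiable) path, and one should take the infimum over paths carefully since the $g_b$-distance is defined as an infimum of lengths. A second minor technical point is that $D$ is a union of submanifolds $D_i$ of possibly varying codimension, so one must confirm the estimate does not depend on the codimension — which it does not, since the argument only uses that $d(x)$ is the Lipschitz distance function and vanishes exactly on $D$. I would present the length lower bound as the key lemma and treat the Cauchy-sequence deduction as a short consequence.
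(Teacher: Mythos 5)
Your proof is correct, but it takes a genuinely different route from the paper's. You argue directly from the Cauchy-sequence definition of completeness: the pointwise bound $g_b \geq g/\OP{diam}_g(M)$ forces a $g_b$-Cauchy sequence to converge in the compact ambient manifold $M$, and you exclude a limit on $D$ by showing that $\log d$ is $1$-Lipschitz as a function on $(M-D,g_b)$, i.e. $d_{g_b}(x,y) \geq \bigl|\log d(x) - \log d(y)\bigr|$, which you prove via Rademacher's theorem and the a.e.\ chain rule applied to the Lipschitz function $t \mapsto d(\gamma(t))$. The paper instead proves that every closed ball $B_N\bigl(x,\tfrac{1}{2}\bigr)$ is compact (which gives completeness, and in fact properness): for a unit-speed path in $g_b$ of length $< \tfrac{1}{2}+\epsilon$, it considers the \emph{last} time $t_0$ at which $d(\gamma(t_0)) \geq d(x)$, observes that after $t_0$ the $g$-speed equals $d(\gamma(t)) \leq d(x)$, so the $g$-length of the tail is at most $\bigl(\tfrac{1}{2}+\epsilon\bigr)d(x)$, and concludes $d(y) \geq \tfrac{d(x)}{2}-\epsilon d(x)$; hence the ball sits inside the compact set $\{d \geq d(x)/2\}$. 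Both arguments exploit the same mechanism --- approaching $D$ is expensive because $g$-speed equals $d$ times $g_b$-speed --- but the paper's last-exit-time trick uses the Lipschitz property of $d$ only through a triangle inequality, so it avoids Rademacher and a.e.\ differentiation entirely, and its conclusion (compact balls) is precisely the form reused later in Lemmas \ref{l:compact.ball} and \ref{l:finite.set}. What your version buys is the sharper, reusable quantitative estimate $d_{g_b}(x,y) \geq |\log(d(x)/d(y))|$, which makes the ``points of $D$ are at infinite distance'' intuition explicit; the cost is the measure-theoretic bookkeeping you flag, plus the (routine but necessary) check in the case $x_\infty \in M-D$ that $d_g$-convergence upgrades to $d_{g_b}$-convergence near a point where $1/d$ is bounded.
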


\begin{proof}

Let $N = (M-D,g_b)$ and let $B_N(x,r)$ denote the closed ball in $N$ of radius $r$ and center $x \in N$.
To show completeness it amounts to show that for every $x \in N$ the ball $B_N(x,\frac{1}{2})$ is compact. 

Let $x \in N$. We shall show that the distance of $B_N(x,\frac{1}{2})$ to $D$ is at least $\frac{d(x)}{2}$,
i.e., 
$$B_N(x,\frac{1}{2}) \subset L \coloneqq \{y \in N \colon d(y)\geq \frac{d(x)}{2}\}.$$
Since $L$ is compact, it follows that $B_N(x,\frac{1}{2})$ is compact. 

Suppose $y \in B_N(x,\frac{1}{2})$ and $d(y) < d(x)$ (otherwise obviously $y \in L$). 
Let $\epsilon > 0$ and let $\gamma \colon [0,l] \to N$ be a path connecting $x$ with $y$ such that 
$|\dot{\gamma}(t)|_{g_b} = 1$ for $t \in [0,l]$ and $l < \frac{1}{2} + \epsilon$.

Let 
$$t_0  = \sup \{ t \in [0,l]~\colon~d(\gamma(t)) \geq d(x)\},$$
i.e., $t_0$ is the last time when $d(\gamma(t_0)) = d(x)$.  
For $t \geq t_0$, we have 
$$|\dot{\gamma}(t)|_g = |\dot{\gamma}(t)|_{g_b}d(\gamma(t)) = d(\gamma(t)) \leq d(x).$$
Let $\gamma'$ be the restriction of $\gamma$ to the interval $[t_0,l]$.
Let $l_g(\gamma')$ be the length of $\gamma'$ in the metric $g$. Since $|\dot{\gamma}(t)|_g \leq d(x)$, we have  
$$l_g(\gamma') \leq (l-t_0)d(x) \leq (\frac{1}{2}+\epsilon)d(x).$$ 
Therefore the distance of $y$ to $D$ in $g$ is at least 
$$d(y) \geq d(\gamma(t_0)) - l_g(\gamma') \geq d(x) - (\frac{1}{2}+\epsilon)d(x) = \frac{d(x)}{2}-\epsilon d(x).$$ 
Since $\epsilon$ is arbitrarily small, $y \in L$ and therefore $B_N(x,\frac{1}{2}) \subset L$. 

\end{proof}

Before we proceed we need the following simple lemma. 
Note that this lemma would be standard if $(M-D,g_b)$ were a complete Riemannian manifold.

\begin{lemma}\label{l:compact.ball}
Let $N = (M-D,g_b)$ and let $\widetilde{N}$ be the universal cover of $N$ with the pulled-back $C^0$-Riemannian metric. 
Then every closed ball in $\widetilde{N}$ is compact.
\end{lemma}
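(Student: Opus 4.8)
The plan is to realize $\widetilde{N}$ as a complete, locally compact length space and then to invoke the Hopf--Rinow theorem for length spaces (in the Cohn--Vossen form), which asserts that such a space is proper, i.e.\ all of its closed balls are compact. As the remark preceding the lemma points out, we cannot simply quote the classical Riemannian Hopf--Rinow theorem, because $g_b$ is only $C^0$ and so there is no a priori control on the existence or minimality of geodesics; the length-space version is designed precisely to sidestep this.

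First I would record the elementary properties of the covering projection $\pi \colon \widetilde{N} \to N$. Since $\widetilde{N}$ carries the pulled-back $C^0$-Riemannian structure and $\pi$ is a local diffeomorphism, $\pi$ is a local isometry; in particular it preserves the lengths of paths and is $1$-Lipschitz for the associated length metrics, and $\widetilde{N}$ is again a length space. Local compactness of $\widetilde{N}$ is immediate from the local compactness of the manifold $N$ together with the local-isometry property.

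The heart of the argument is completeness of $\widetilde{N}$. I would first note that $N$ itself is complete: by Lemma \ref{l:compl} every ball $B_N(x,\tfrac{1}{2})$ is compact, so any Cauchy sequence in $N$ is eventually contained in such a ball and therefore converges. To lift completeness, take a Cauchy sequence $(\tilde{x}_i)$ in $\widetilde{N}$; its image $(\pi(\tilde{x}_i))$ is Cauchy in $N$, hence converges to some $x$. Choosing $r>0$ small enough that $B_N(x,r)$ lies inside an evenly covered neighborhood, I would argue that the sheets of $\pi^{-1}(B_N(x,r))$ are pairwise separated by a definite distance: a path joining two distinct sheets must leave $\pi^{-1}(B_N(x,r))$, and its projection, of equal length, must then travel at least a fixed distance in $N$. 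Consequently the tail of $(\tilde{x}_i)$ lies in a single sheet, on which $\pi$ is an isometry, so $(\tilde{x}_i)$ converges to the lift of $x$ in that sheet.

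The step I expect to be the main obstacle is exactly this sheet-separation estimate: making rigorous that distinct lifts stay uniformly apart, using only the length and covering structure together with the compactness of balls from Lemma \ref{l:compl}, and without appealing to any smoothness of $g_b$. Once completeness, local compactness, and the length-space property are established, the Hopf--Rinow--Cohn--Vossen theorem yields that every closed ball in $\widetilde{N}$ is compact, which completes the proof.
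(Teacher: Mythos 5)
Your proposal is correct, but it takes a genuinely different route from the paper's. The paper handles the $C^0$-regularity problem by smoothing the metric: it invokes an approximation theorem to produce a smooth function $f$ with $C^{-1}f < 1/d < Cf$, hence a smooth Riemannian metric $g_s$ on $N$ that is bi-Lipschitz equivalent to $g_b$; completeness of $(N,g_s)$ follows from Lemma \ref{l:compl} (equivalent metrics have the same Cauchy sequences), and then entirely classical facts apply — the universal cover of a complete smooth Riemannian manifold is complete, and Hopf--Rinow gives compactness of closed balls — after which properness transfers back to the pullback of $g_b$ by the bi-Lipschitz equivalence. You instead stay within metric geometry: you lift completeness of $N$ through the covering map and then invoke the Hopf--Rinow--Cohn--Vossen theorem for complete, locally compact length spaces. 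Both arguments are sound. The paper's smoothing trick buys brevity, since everything reduces to standard smooth Riemannian statements and no covering-space metric estimates need to be written down; the cost is the (somewhat breezily cited) approximation step. Your route avoids smoothing altogether and makes explicit exactly where completeness lifts; the sheet-separation estimate you flag as the main obstacle is indeed the crux, but it is the standard argument and goes through with no smoothness of $g_b$: a Cauchy tail has pairwise distances realized up to $\epsilon$ by short paths, these project to short paths which cannot escape an evenly covered metric ball around the limit downstairs, so the tail lies in a single sheet, and on a slightly smaller ball the projection restricted to that sheet preserves distances (lifts of short connecting paths stay in the sheet), forcing convergence to the lift of the limit. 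One small simplification available to you: you do not need to re-derive completeness of $N$ from compactness of the balls $B_N(x,\tfrac12)$, since Lemma \ref{l:compl} already asserts completeness.
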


\begin{proof}
By the Weierstrass approximation theorem, there exists $C \in \mathbb{R}$ and a smooth function $f \colon N \to \mathbb{R}$ such that
$C^{-1}f(x) < 1/d(x) < Cf(x)$ for every $x \in N$. Let $g_s$ be a Riemannian metric defined by $|v|_{g_s} = f(x)|v|_g$, where $v \in T_xN$.
Then $C^{-1}|v|_{g_s} < |v|_{g_b} < C|v|_{g_s}$, 
thus the metrics induced by $g_b$ and $g_s$ are equivalent.
By Lemma \ref{l:compl}, $(N,g_s)$ is a complete Riemannian manifold
and it is a standard fact that closed balls in the universal cover of $(N,g_s)$ are compact.  
Clearly, it holds as well for $(N,g_b)$, since the metrics defined by pull-backs of $g_s$ and $g_b$ to the universal cover are equivalent.  

\end{proof}

Let $h \in \pi_1(M-D)$. Denote by $l(h)$ the infimum of lengths (with respect to $g_b$) of based loops representing $h \in \pi_1(M-D)$.

\begin{lemma}\label{l:finite.set} 
For every $r$, the set $\{h \in \pi_1(M-D) \colon l(h) < r\}$ is finite. 
\end{lemma}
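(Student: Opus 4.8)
The plan is to pass to the universal cover $\widetilde{N}$ of $N=(M-D,g_b)$ and reinterpret $l(h)$ as a distance there. Fix a basepoint $x_0\in N$ (working inside a fixed path component, so that $\pi_1(M-D)=\pi_1(N,x_0)$), choose a lift $\tilde{x}_0\in\widetilde{N}$, and let $p\colon\widetilde{N}\to N$ be the covering map, which is a local isometry for the pulled-back metric. Identify the deck transformation group with $G=\pi_1(N,x_0)$; it acts freely on $\widetilde{N}$, and the fiber satisfies $p^{-1}(x_0)=G\cdot\tilde{x}_0$.

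First I would establish, for every $h\in G$, the identity
$$l(h)=d_{\widetilde{N}}(\tilde{x}_0,\,h\cdot\tilde{x}_0).$$
Indeed, a based loop $\gamma$ at $x_0$ representing $h$ lifts uniquely to a path in $\widetilde{N}$ from $\tilde{x}_0$ to $h\cdot\tilde{x}_0$, and conversely every path in $\widetilde{N}$ between these two points projects to a based loop at $x_0$ representing $h$. Since $p$ is a local isometry, a path and its lift have the same length, so taking the infimum over all representatives of $h$ produces exactly the distance from $\tilde{x}_0$ to $h\cdot\tilde{x}_0$ in $\widetilde{N}$.

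With this identity the set in question is controlled by a ball in $\widetilde{N}$:
$$\{h\in G\colon l(h)<r\}\subseteq\{h\in G\colon h\cdot\tilde{x}_0\in B_{\widetilde{N}}(\tilde{x}_0,r)\},$$
where $B_{\widetilde{N}}(\tilde{x}_0,r)$ denotes the closed ball. By Lemma \ref{l:compact.ball} this ball is compact. On the other hand, the fiber $p^{-1}(x_0)=G\cdot\tilde{x}_0$ is a closed discrete subset of $\widetilde{N}$, discreteness and closedness being immediate from the existence of evenly covered neighborhoods and the continuity of $p$. A closed discrete set meets a compact set in only finitely many points, so $G\cdot\tilde{x}_0\cap B_{\widetilde{N}}(\tilde{x}_0,r)$ is finite; since the deck action is free, the map $h\mapsto h\cdot\tilde{x}_0$ is injective, and therefore $\{h\in G\colon l(h)<r\}$ is finite.

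The only genuine content beyond standard covering-space bookkeeping is the compactness of closed balls in $\widetilde{N}$, which is precisely what Lemma \ref{l:compact.ball} supplies. This is the step I expect to do the heavy lifting, and it is exactly what forces the passage to the universal cover rather than a direct argument in $N$: once $D$ is deleted and $g$ is rescaled by $1/d$, balls in $N$ itself need no longer be compact, so the naive ``finitely many short loops'' reasoning is not available downstairs and must be recovered upstairs.
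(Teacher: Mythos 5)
Your proof is correct and follows essentially the same route as the paper: both pass to the universal cover, identify $l(h)$ with the displacement of a lifted basepoint under the deck action, and combine compactness of closed balls in $\widetilde{N}$ (Lemma \ref{l:compact.ball}) with discreteness of the fiber $p^{-1}(x_0)$. The paper's version is merely terser, leaving implicit the identity $l(h)=d_{\widetilde{N}}(\tilde{x}_0,h\cdot\tilde{x}_0)$ and the closedness and freeness bookkeeping that you spell out.
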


\begin{proof}
Let $N = (M-D,g_b)$, let $x \in N$ be a basepoint and let $p \colon \widetilde{N} \to N$ be the universal cover of $N$.
Choose $y \in p^{-1}(x)$. 
The pre-image $p^{-1}(x)$ is discrete and $B_{\widetilde{N}}(y,r) \subset \widetilde{N}$ is compact by Lemma \ref{l:compact.ball}.
Thus $p^{-1}(x) \cap B_{\widetilde{N}}(y,r)$ is finite for every~$r$ and therefore $\{h \in \pi_1(N) \colon l(h) < r\}$ is finite.
\end{proof}

\section{A Lipschitz embedding}\label{s:embedding}
In this section we focus on the particular case where $M-D$ is a configuration space. 
Let $(S,g)$ be a compact Riemannian surface and
$g^n$ be the product metric on $S^n$. Let $D_{ij} = \{ (x_1,\ldots,x_n) \in S^n \colon x_i = x_j \}$. Denote by $C_n(S) = S^n - \cup_{i,j} D_{ij}$
the configuration space of $n$ ordered points in $S$. 
On $S^n$ and $C_n(S)$ we consider the measure induced by the product metric $g^n$. 

We shall now find a formula for $d_{g^n}(x,D_{ij})$ in terms of the metric on $S$. 
Let $x=(x_1,\ldots,x_n)~\in~S^n$ and let $m$ be the midpoint of a geodesic connecting $x_i$ with $x_j$.
If we start moving points $x_i$ and $x_j$ towards $m$ with constant speed, we get a geodesic in $S^n$ connecting $x$ with the closest point in $D_{ij}$.
Since $d_g(m,x_i) = d_g(m,x_j) = \frac{1}{2}d_g(x_i,x_j)$ and we are in the product metric, we get 
$$d_{g^n}(x,D_{ij}) = \sqrt{d_g(m,x_i)^2 + d_g(m,x_j)^2} = \frac{1}{\sqrt{2}}d_g(x_i,x_j).$$ 
 
The distance function $d$ has the form 
$$
d(x) = \frac{1}{\sqrt{2}}min \{ d_g(x_i,x_j) \colon 1 \leq i < j \leq n\}. 
$$

Let $g_b = (g^n)_b$ be the metric on $C_n(S)$ defined in the previous section, namely
$|v|_{g_b} = \frac{|v|_{g^n}}{d(x)}$, where $v \in T_x(C_n(S))$.  

Let us fix a point $p \in S$ and let $x=(x_1,\ldots,x_{n-1}) \in S^{n-1}$. 
Then $(p,x) \in S^n$ and $d((p,x))$ is the minimum over $\frac{1}{\sqrt{2}}d_g(p,x_i)$ for $1\leq i \leq n-1$ 
and $\frac{1}{\sqrt{2}}d_g(x_i,x_j)$ for $1 \leq i < j \leq n-1$. 

We need the following technical lemma.

\begin{lemma}\label{l:finite.int}
There exists $C \in \mathbb{R}$ such that for every $p \in S$ we have

$$
	\int_{S^{n-1}} \frac{1}{d((p,x))}dx \leq C.
$$
\end{lemma}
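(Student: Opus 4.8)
The plan is to reduce the integral of a reciprocal minimum to a finite sum of integrals of reciprocal distances, and then to reduce everything to a single uniform estimate for the integral of $1/d_g(p,\cdot)$ over the surface. First I would use the elementary inequality $1/\min_k \delta_k = \max_k (1/\delta_k) \le \sum_k 1/\delta_k$, valid for any finite collection of positive numbers $\delta_k$. Applying this to the distances appearing in the formula for $d((p,x))$, namely $d_g(p,x_i)$ for $1 \le i \le n-1$ and $d_g(x_i,x_j)$ for $1 \le i < j \le n-1$, gives the pointwise bound
$$\frac{1}{d((p,x))} \le \sqrt{2} \Big( \sum_{i=1}^{n-1} \frac{1}{d_g(p,x_i)} + \sum_{1 \le i < j \le n-1} \frac{1}{d_g(x_i,x_j)}\Big).$$
Integrating over $S^{n-1}$, it therefore suffices to bound each of the finitely many summands by a constant independent of $p$.

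Next I would factor each summand using Fubini's theorem. In a term containing $1/d_g(p,x_i)$ the integrand depends only on $x_i$, so integrating out the remaining coordinates produces a fixed power of $\OP{area}(S)$ times $\int_S \frac{1}{d_g(p,y)}\,dy$. Likewise, a term containing $1/d_g(x_i,x_j)$ integrates to a fixed power of $\OP{area}(S)$ times $\int_S\int_S \frac{1}{d_g(y,y')}\,dy\,dy'$. Both are thus controlled once we know the key estimate
$$A := \sup_{p \in S} \int_S \frac{1}{d_g(p,y)}\,dy < \infty,$$
since the double integral is at most $A\cdot\OP{area}(S)$ by integrating first in the variable $y$ and then in $y'$.

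The heart of the proof, and the step I expect to be the main obstacle, is establishing the uniform bound $A < \infty$; this is exactly where compactness and two-dimensionality of $S$ enter. Since $S$ is a compact surface, its injectivity radius $\rho>0$. Writing $S = B_g(p,\rho) \cup (S \setminus B_g(p,\rho))$, on the outer region $d_g(p,y) \ge \rho$, so that contribution is at most $\OP{area}(S)/\rho$, uniformly in $p$. On the geodesic ball $B_g(p,\rho)$ I would pass to geodesic normal coordinates $(r,\theta)$ centered at $p$, in which $d_g(p,y)=r$ and the area element is $J(r,\theta)\,dr\,d\theta$ with $0 \le J(r,\theta) \le C_0\, r$ for a constant $C_0$ depending only on the curvature bounds of $S$, hence uniform in $p$ by compactness. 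The singular part then becomes
$$\int_0^{2\pi}\!\!\int_0^{\rho} \frac{1}{r}\, J(r,\theta)\,dr\,d\theta \le 2\pi C_0\, \rho,$$
which is finite and independent of $p$; note that integrability hinges on the cancellation of the $1/r$ singularity against the factor $r$ in the area element, a feature special to dimension $2$. Combining the two regions bounds $A$, and assembling the finitely many factored estimates yields the desired constant $C$.
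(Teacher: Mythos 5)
Your proof is correct and takes essentially the same approach as the paper: the same pointwise bound of $1/d((p,x))$ by $\sqrt{2}$ times the sum of the reciprocal pairwise distances, the same Fubini factorization into one- and two-variable integrals, and the same key uniform estimate $\sup_{p \in S} \int_S \frac{1}{d_g(p,y)}\,dy < \infty$, which the paper justifies tersely by comparison with polar coordinates on the euclidean disc while you prove via the injectivity radius and normal coordinates. The one caveat is that $S$ is allowed to have boundary, in which case ``positive injectivity radius'' and geodesic normal coordinates need minor adaptation (e.g., pass to the double of $S$, or use half-disc charts near the boundary) --- a point the paper's own one-line justification glosses over as well.
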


\begin{proof}
	It can be easily seen using polar coordinates that there exists $C'$ such that for every $p \in D^2$, where $D^2$ is the euclidean disc, we have
	$$\int_D \frac{1}{|p-x|}dx < C'.$$

	Since such $C'$ exists for a disc, we as well have a similar bound for every compact surface $S$, i.e., for every $p \in S$ we have
	$$\int_S \frac{1}{d_g(p,x)}dx < C'.$$
	
	After integrating over all possible $p \in S$ we have (we assume $\OP{area}(S)=1$)
	$$\int_{S^2} \frac{1}{d_g(p,x)}dpdx < C'.$$
	
	Let  $x = (x_1,\ldots,x_{n-1})$. Since $d((p,x))$ is the minimum over $\frac{1}{\sqrt{2}}d_g(p,x_i)$, $i~=~1,\ldots,n~-~1$,
	and $\frac{1}{\sqrt{2}}d_g(x_i,x_j)$, $1 \leq i < j \leq n-1$, we have that
	$$
	\frac{1}{d((p,x))} \leq \sum_{i} \frac{\sqrt{2}}{d_g(p,x_i)} + \sum_{i \neq j} \frac{\sqrt{2}}{d_g(x_i,x_j)}.
	$$
	Thus 
	\begin{align*}
		&\int_{S^{n-1}} \frac{1}{d((p,x))}dx \leq  \\ 
		&\leq \sum_{i} \int_{S^{n-1}} \frac{\sqrt{2}}{d_g(p,x_i)}dx + \sum_{i \neq j} \int_{S^{n-1}} \frac{\sqrt{2}}{d_g(x_i,x_j)}dx= \\
		&= (n-1)\int_{S} \frac{\sqrt{2}}{d_g(p,x)}dx + \frac{n(n-1)}{2} \int_{S^{2}} \frac{\sqrt{2}}{d_g(x_1,x_2)}dx_1dx_2 \leq\\
		&\leq \sqrt{2}(n-1)C'+ \frac{n(n-1)}{\sqrt{2}}C' \eqqcolon C.
	\end{align*}
\end{proof}

Let $\mu$ be the measure on $C_n(S)$ induced by the product metric $g^n$.
A diffeomorphism $f \in \OP{Diff}_0(S,\OP{area})$ defines a product diffeomorphism $f_* \in \OP{Diff}_0(C_n(S),\mu)$.
Namely, for $x = (x_1,\ldots,x_n) \in S^n$ we have $f_*(x) = (f(x_1),\ldots,f(x_n))$.
Thus we have a product embedding $\OP{Diff}_0(S,\OP{area}) \hookrightarrow \OP{Diff}_0(C_n(S),\mu)$.

On $\OP{Diff}_0(C_n(S),\mu)$ we consider the $L^1$-norm defined by the metric $g_b$ and the measure $\mu$.
Note that here we are in the case where $g_b$ and $\mu$ are not compatible, that is, the measure
induced by $g_b$ and the measure $\mu$ are different. 

The following lemma provides a link between the $L^1$-norm on $\OP{Diff}_0(S,\OP{area})$ 
and the $L^1$-norm on $\OP{Diff}_0(C_n(S),\mu)$ defined above.
Note that in the proof it is essential that $f$ preserves the area on $S$.

\begin{lemma}\label{l:embedding}
The product embedding $\OP{Diff}_0(S,\OP{area}) \hookrightarrow \OP{Diff}_0(C_n(S),\mu)$
is Lipschitz, i.e., there exists $C$ such that $l_1(f_*) \leq Cl_1(f)$. 
\end{lemma}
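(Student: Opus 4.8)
The plan is to fix an arbitrary smooth isotopy $\{f_t\}$ from $\OP{id}_S$ to $f$, push it forward to the product isotopy $\{(f_t)_*\}$ from $\OP{id}_{C_n(S)}$ to $f_*$, and bound $l_1(\{(f_t)_*\})$ by a fixed multiple of $l_1(\{f_t\})$; the constant will turn out to be $nC$, with $C$ the constant of Lemma~\ref{l:finite.int}.

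First I compute the velocity of the induced isotopy. Since $(f_t)_*(x) = (f_t(x_1),\ldots,f_t(x_n))$, its time derivative at $x$ is $(\dot{f}_t(x_1),\ldots,\dot{f}_t(x_n))$, whose $g^n$-norm is $\big(\sum_i |\dot{f}_t(x_i)|_g^2\big)^{1/2}$. Dividing by $d((f_t)_*(x))$ to pass to $g_b$ and using $\big(\sum_i a_i^2\big)^{1/2} \le \sum_i |a_i|$, I get
\begin{align*}
l_1(\{(f_t)_*\}) &= \int_0^1 \int_{C_n(S)} \frac{\big(\sum_{i} |\dot{f}_t(x_i)|_g^2\big)^{1/2}}{d((f_t)_*(x))}\, dx\, dt \\
&\le \sum_{i=1}^n \int_0^1 \int_{C_n(S)} \frac{|\dot{f}_t(x_i)|_g}{d((f_t)_*(x))}\, dx\, dt.
\end{align*}

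The crucial step, and the only place area preservation is used, is a change of variables in each summand. Fix $t$ and $i$. Since $\cup_{j<k}D_{jk}$ is $\mu$-null I integrate over $S^n$ and substitute $y_j = f_t(x_j)$ for all $j$; because $f_t$ preserves $\OP{area}$ the product map preserves $\mu$, so the Jacobian is $1$, the denominator becomes $d(y)$, and with $V_t = \dot{f}_t \circ f_t^{-1}$ the generating vector field the numerator becomes $|V_t(y_i)|_g$. Hence each summand equals $\int_0^1 \int_{S^n} |V_t(y_i)|_g / d(y)\, dy\, dt$. By permutation symmetry of $d$ I may regard $y_i$ as the distinguished first coordinate $p$ and apply Lemma~\ref{l:finite.int} to the integral over the remaining $n-1$ coordinates, bounding it by $C$ uniformly in $p$. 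A second area-preserving substitution $p = f_t(x)$ gives $\int_S |V_t(p)|_g\, dp = \int_S |\dot{f}_t(x)|_g\, dx = \|\dot{f}_t\|_1$, so each summand is at most $C\, l_1(\{f_t\})$.

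Summing the $n$ identical estimates yields $l_1(\{(f_t)_*\}) \le nC\, l_1(\{f_t\})$. Since $\{(f_t)_*\}$ joins $\OP{id}_{C_n(S)}$ to $f_*$, its $L^1$-length dominates $l_1(f_*)$; taking the infimum over all isotopies $\{f_t\}$ then gives $l_1(f_*) \le nC\, l_1(f)$. I expect the main obstacle to be the bookkeeping of the change of variables: one must check that the single substitution $y_j = f_t(x_j)$ simultaneously trivializes the Jacobian and converts the configuration-dependent denominator $d((f_t)_*(x))$ into $d(y)$, which is exactly the form controlled by Lemma~\ref{l:finite.int}. Area preservation is indispensable here, since without it neither the Jacobian factor nor the moving denominator could be disposed of, and Lemma~\ref{l:finite.int} would not apply.
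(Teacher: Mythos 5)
Your proof is correct and follows essentially the same route as the paper: the paper phrases the computation as a pointwise bound $\|X_*\|_1 \leq nC\|X\|_1$ for a tangent vector $X$ at $f$, while you carry the time integral and the generating field $V_t = \dot{f}_t \circ f_t^{-1}$ throughout, but the symmetrization over coordinates, the measure-preserving change of variables that moves the denominator to $d(y)$, and the application of Lemma~\ref{l:finite.int} via Fubini are identical. The constant $nC$ you obtain matches the one implicit in the paper's proof.
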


\begin{proof}

Let $f \in \OP{Diff}_0(S,\OP{area})$ and let $X \colon S \to TS$ such that $X(x) \in T_{f(x)}S$. 
For $x = (x_1,\ldots,x_n)~\in~C_n(S)$ we define $X_*(x) = (X(x_1),\ldots,X(x_n))\in T_{f_*(x)}C_n(S)$. 

The set $\cup_{i,j} D_{ij} \subset S^n$ is of measure zero.
It means that we can regard $|X_*(x)|_{g_b}$ as a measurable function defined on $S^n$.
Thus in what follows, we integrate $|X_*(x)|_{g_b}$ over $S^n$ with the product measure rather then over $C_n(S)$. 

To prove the lemma, it is enough to show that there exists $C$ such that for every 
$f~\in~\OP{Diff}_0(S,\OP{area})$ and every map $X \colon S \to TS$ such that $X(x) \in T_{f(x)}S$ the following inequality holds
$$
	\|X_*\|_1 \leq C\|X\|_1.
$$

Recall that by definition $\|X_*\|_1 = \int_{S^n}|X_*(x)|_{g_b} dx$. We have

\begin{align*}
	\int_{S^n}|X_*(x)|_{g_b} dx &= \int_{S^n} \frac{|X_*(x)|_{g^n}}{d(f_*(x))}dx\\
	&=\int_{S^n} \frac{\sqrt{|X(x_1)|^2_g+\ldots+|X(x_n)|^2_g}}{d(f_*(x))}dx\\
	& \leq \int_{S^n} \frac{|X(x_1)|_g+\ldots+|X(x_n)|_g}{d(f_*(x))}dx\\
	& = n \int_{S^n} \frac{|X(x_1)|_g}{d(f_*(x))}dx.\\
\end{align*}

Since $f_*$ preserves the measure on $S^n$, we have 

\begin{align*}
	\int_{S^n} \frac{|X(x_1)|_g}{d(f_*(x))}dx &= \int_{S^n} \frac{|X \circ f^{-1}(x_1)|_g}{d(x)}dx&\\
	& = \int_S |X \circ f^{-1}(x_1)|_g \Big(\int_{S^{n-1}} \frac{1}{d(x_1,x)}dx\Big) dx_1&\\
	& \leq C \int_S |X \circ f^{-1}(x_1)|_g dx_1& \text{Lemma \ref{l:finite.int}}\\
	& = C \int_S |X(x_1)|_g dx_1&\\ 
	& = C\|X\|_1.&
\end{align*}

\end{proof}

\section{Proof of the theorem}
\setcounter{theorem*}{0}
\begin{theorem*}
Let $(S,g)$ be a compact surface (with or without boundary). Then for every $p\geq 1$
the $L^p$-diameter of $\OP{Diff}_0(S,area)$ is infinite.
\end{theorem*}

\begin{proof}
By the H\"older inequality we can assume $p=1$. Fix $n>3$. 

Let $z~=~(z_1,\ldots,z_n)~\in~C_n(S)$. Denote the pure braid group on $n$ strings by $P_n(S) = \pi_1(C_n(S),z)$. 
Suppose $U_i \subset S$ are disjoint discs such that $z_i \in U_i$.
Let $U = U_1 \times U_2 \ldots \times U_n \subset C_n(S)$.

Choose $h \in P_n(S)$ and $\gamma$ a loop in $C_n(S)$ representing $h$. 
Let $f_t \in \OP{Diff}_0(S,\OP{area})$, $t\in [0,1]$, be an isotopy such that $(f_t)_* \in \OP{Diff}_0(C_n(S),\mu)$ moves $U$ all the way along $\gamma$
and have properties (1) and (2) from the proof of Proposition \ref{p:basic.argument}. Let $f_h = f_1$. 

It is convenient to imagine that $f_t$ moves $U_i$ along the trajectory of $z_i$ given by~$\gamma$. 
In fact, to construct $f_t$ satisfying the above properties for a general $h \in P_n(S)$, it is enough to do it for a given finite set of generators 
of $P_n(S)$ (or generators of the full braid group $B_n(S)$). In \cite{bellingeri} one can find a set of generators of $B_n(S)$, for which
the construction of $f_t$ is straightforward.  

Recall that on $C_n(S)$ we consider the complete metric $g_b$. 
By Lemma \ref{l:finite.set}, the set $\{h \in \pi_1(C_n(S)) \colon l(h) < r\}$ is finite for every $r$
and $P_n(S)/Z(P_n(S))$ is infinite.
It follows from the proof of Proposition \ref{p:basic.argument} that $l_1((f_h)_*)$ can be arbitrary large.

Therefore, due to Lemma \ref{l:embedding}, $l_1(f_h)$ can be arbitrary large. 
Thus the $L^1$-diameter of $\OP{Diff}_0(S,\OP{area})$ is infinite. 

\end{proof}

\bibliography{bibliography}

\begin{thebibliography}{10}

\bibitem{arnold.khesin}
Vladimir~I. Arnold and Boris~A. Khesin.
\newblock {\em Topological methods in hydrodynamics}, volume 125 of {\em
  Applied Mathematical Sciences}.
\newblock Springer-Verlag, New York, 1998.

\bibitem{bellingeri}
Paolo Bellingeri.
\newblock On presentations of surface braid groups.
\newblock {\em J. Algebra}, 274(2):543--563, 2004.

\bibitem{brandenburskylpmetrics}
Michael Brandenbursky.
\newblock Quasi-morphisms and ${L}^p$-metrics on groups of volume-preserving
  diffeomorphisms.
\newblock {\em J. Topol. Anal.}, 4(2):255--270, 2012.

\bibitem{brandy.kedra}
Michael Brandenbursky and Jarek K{\polhk{e}}dra.
\newblock Quasi-isometric embeddings into diffeomorphism groups.
\newblock {\em Groups Geom. Dyn.}, 7(3):523--534, 2013.

\bibitem{mb}
Michael Brandenbursky and Micha\l\phantom{l} Marcinkowski.
\newblock Bounded cohomology of transformation groups.
\newblock {\em submitted}.

\bibitem{brandshelaut}
Michael Brandenbursky and Egor Shelukhin.
\newblock On the ${L}^p$-geometry of autonomous {H}amiltonian diffeomorphisms
  of surfaces.
\newblock {\em Math. Res. Lett.}, 22(5):1275--1294, 2015.

\bibitem{brandy.shelukhin}
Michael Brandenbursky and Egor Shelukhin.
\newblock The {$L^p$}-diameter of the group of area-preserving diffeomorphisms
  of {$S^2$}.
\newblock {\em Geom. Topol.}, 21(6):3785--3810, 2017.

\bibitem{eliashberg.ratiu}
Yakov Eliashberg and Tudor Ratiu.
\newblock The diameter of the symplectomorphism group is infinite.
\newblock {\em Invent. Math.}, 103(2):327--340, 1991.

\bibitem{gambaudo.lagrange}
Jean-Marc Gambaudo and Maxime Lagrange.
\newblock Topological lower bounds on the distance between area preserving
  diffeomorphisms.
\newblock {\em Bol. Soc. Brasil. Mat. (N.S.)}, 31(1):9--27, 2000.

\bibitem{kim.koberda}
Sang-hyun Kim and Thomas Koberda.
\newblock Anti-trees and right-angled {A}rtin subgroups of braid groups.
\newblock {\em Geom. Topol.}, 19(6):3289--3306, 2015.

\bibitem{egoraction}
Egor Shelukhin.
\newblock The action homomorphism, quasimorphisms and moment maps on the space
  of compatible almost complex structures.
\newblock {\em Comment. Math. Helv.}, 89(1):69--123, 2014.

\bibitem{shni.rus}
Alexander~I. Shnirelman.
\newblock The geometry of the group of diffeomorphisms and the dynamics of an
  ideal incompressible fluid.
\newblock {\em Mat. Sb. (N.S.)}, 128(170)(1):82--109, 144, 1985.

\bibitem{shni.gafa}
Alexander~I. Shnirelman.
\newblock Generalized fluid flows, their approximation and applications.
\newblock {\em Geom. Funct. Anal.}, 4(5):586--620, 1994.

\end{thebibliography}
\bibliographystyle{plain}

\end{document}